\theoremstyle{definition}
\newtheorem{dfn}{Definition}[section]
\theoremstyle{plain}
\newtheorem{thm}[dfn]{Theorem}
\newtheorem{cor}[dfn]{Corollary}
\theoremstyle{remark}
\newtheorem{exam}[dfn]{Example}
\newtheorem{rem}[dfn]{Remark}
\title{Operator-valued Khintchine inequality for $\epsilon$-free semicircles}
\author{Beno\^{i}t Collins}
\address{Department of Mathematics, Kyoto University, Kitashirakawa Oiwake-cho, Sakyo-ku, 606-8502, Japan}
\email{collins@math.kyoto-u.ac.jp}
\author{Akihiro Miyagawa}
\address{Department of Mathematics, University of California, San Diego 9500 Gilman Drive \#0112 La Jolla, CA  92093-0112, USA}
\email{amiyagwa@ucsd.edu}
\begin{document}

\begin{abstract}
 We exhibit several bounds for operator norms of the sum of $\epsilon$-free semicircular random variables introduced in the paper of Speicher and Wysocza\'{n}ski. In particular, using the first and second largest eigenvalues of the adjacency matrix $\epsilon$, we show analogs of the operator-valued Khintchine-type inequality obtained by Haagerup and Pisier.      
\end{abstract}

\maketitle

\section{Introduction}
The mixture of classical and free independence has been considered for many years in different contexts. In group theory, Green \cite{Green} introduced the graph product of groups in 1990, which is the mixture of direct and free products associated with a given graph. After that, Caspers and Fima \cite{MR3626564} imported this graph product into the field of operator algebras. Independently, M\l otkowski \cite{MR2044286} studied the same object (he called it $\Lambda$-freeness) from the perspective of non-commutative probability theory, which was revisited by Speicher and Wysocza\'{n}ski \cite{MR3552222} (they called it $\epsilon$-freeness). In recent years, there have been some results in different directions. Charlesworth et.al. \cite{C24a,C24b} proved several fundamental properties (factoriality, amenability, fullness, etc.) of the graph product of von Neumann algebras and also constructed random permutation matrix models that asymptotically obey this graph independence. Magee and Thomas \cite{MT23} proved strong convergence of random matrix models for the graph independence, which is applied to the fundamental group of a closed hyperbolic manifold. There is also a result by C\'{e}bron et.al. \cite{COY24} about the central limit theorem for $\epsilon$-freeness such that the limit distribution can be characterized by graphon.

One of our motivations is the paper by the first-named author and his coauthors \cite{Collins_2023} where they study the sum of random matrices that are asymptotically $\epsilon$-free. In their paper, the authors consider this sum as random local Hamiltonians based on the paper by Charlesworth and the first author \cite{MR4248552}, which also appears in the context of quantum many-body system \cite{PhysRevB.100.245152}. Recently, Chen et al. \cite{CVV24} proved the strong convergence of their (Gaussian) random matrix model, which says that the operator norm of any polynomial in them converges to the operator norm of the same polynomial in $\epsilon$-free semicircles.   
In our paper, we show several bounds for the operator norm of the sum of $\epsilon$-free semicircle random variables. 
For upper bounds, we have the following result.
\begin{thm}\label{Main}
Let $s_1,\dots,s_d$ be a standard $\epsilon$-free semicircle. Then, for any $a_1,\ldots,a_d \in B(H)$, we have
\[\left \|\sum_{i=1}^d a_i \otimes s_i \right \|_{B(H \otimes \mathcal{F}_{\epsilon})}\le 2 \sqrt{\lambda_1+1} \max \left \{\left\|\sum_{i=1}^d a_i^*a_i\right\|_{B(H)}^{\frac{1}{2}}, \ \left\|\sum_{i=1}^d a_i a_i^*\right\|_{B(H)}^{\frac{1}{2}} \right \} \]
where $\lambda_1$ is the largest eigenvalue of the adjacency matrix of $\epsilon$. Moreover, when $\epsilon$ is a connected regular graph with a degree less than $d-1$, we also have
\[\left \|\sum_{i=1}^d a_i \otimes s_i \right \|_{B(H \otimes \mathcal{F}_{\epsilon})}\le 2  \sqrt{\frac{d(\lambda_2+1)}{d-(\lambda_1-\lambda_2)}}\max \left \{\left\|\sum_{i=1}^d a_i^*a_i\right\|_{B(H)}^{\frac{1}{2}}, \ \left\|\sum_{i=1}^d a_i a_i^*\right\|_{B(H)}^{\frac{1}{2}} \right \} \]
where $\lambda_2$ is the second largest eigenvalue of the adjacency matrix $\epsilon$.
\end{thm}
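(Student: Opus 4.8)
The plan is to use the Fock model of $\epsilon$-free semicircles. Write $s_i=\ell_i+\ell_i^{*}$, where $\ell_1,\dots,\ell_d$ are the creation operators on $\mathcal{F}_{\epsilon}$; they are isometries obeying the $\epsilon$-commutation relation $\ell_i^{*}\ell_j=\delta_{ij}+\epsilon_{ij}\,\ell_j\ell_i^{*}$. Splitting the sum into its creation and annihilation parts,
\[ \sum_{i=1}^d a_i\otimes s_i = C+D,\qquad C:=\sum_{i=1}^d a_i\otimes\ell_i,\quad D:=\sum_{i=1}^d a_i\otimes\ell_i^{*}, \]
the triangle inequality gives $\|\sum_i a_i\otimes s_i\|\le\|C\|+\|D\|$, so it suffices to estimate the two one-sided sums. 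Since $D^{*}=\sum_i a_i^{*}\otimes\ell_i$ has the same form as $C$ with $a_i$ replaced by $a_i^{*}$, the estimate for $D$ will follow from that for $C$ under this substitution, which is exactly what exchanges the roles of $\sum_i a_i^{*}a_i$ and $\sum_i a_i a_i^{*}$.

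The heart of the reduction is a factorization estimate: for any $m_{ij}\in B(\mathcal{F}_{\epsilon})$,
\[ \Bigl\|\sum_{i,j} a_i^{*}a_j\otimes m_{ij}\Bigr\| \le \Bigl\|\sum_i a_i^{*}a_i\Bigr\|\cdot\bigl\|[m_{ij}]_{i,j}\bigr\|_{M_d(B(\mathcal{F}_{\epsilon}))}. \]
This I would prove by writing the left-hand operator as $(\mathbf c^{*}\otimes 1)(1\otimes[m_{ij}])(\mathbf c\otimes 1)$, where $\mathbf c\colon H\to H\otimes\mathbb{C}^d$ is the column $h\mapsto\sum_i a_ih\otimes e_i$, for which $\mathbf c^{*}\mathbf c=\sum_i a_i^{*}a_i$. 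Applying this to $C^{*}C=\sum_{i,j}a_i^{*}a_j\otimes\ell_i^{*}\ell_j$ yields $\|C\|^2\le\|\sum_i a_i^{*}a_i\|\cdot\nu$, where $\nu:=\|[\ell_i^{*}\ell_j]_{i,j}\|=\|\sum_i\ell_i\ell_i^{*}\|$ (the last equality because $[\ell_i^{*}\ell_j]$ is the Gram matrix of the row $[\ell_1,\dots,\ell_d]$). The same estimate for $D$ gives $\|D\|^2\le\|\sum_i a_i a_i^{*}\|\cdot\nu$, whence $\|\sum_i a_i\otimes s_i\|\le 2\sqrt{\nu}\,\max\{\|\sum_i a_i^{*}a_i\|^{1/2},\|\sum_i a_i a_i^{*}\|^{1/2}\}$. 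Both assertions of the theorem thus reduce to estimating the single Fock-space quantity $\nu$.

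The remaining and main point is to bound $\nu=\|\sum_i\ell_i\ell_i^{*}\|$. Using $\ell_i^{*}\ell_j=\delta_{ij}+\epsilon_{ij}\ell_j\ell_i^{*}$ one has $[\ell_i^{*}\ell_j]=I_d+[\epsilon_{ij}\ell_j\ell_i^{*}]$, so the task is to show $\sum_i\ell_i\ell_i^{*}\le(\lambda_1+1)\,\mathrm{id}$, i.e. $\sum_i\|\ell_i^{*}\psi\|^2\le(\lambda_1+1)\|\psi\|^2$ for all $\psi\in\mathcal{F}_{\epsilon}$; by the relation this is the same as $\|[\epsilon_{ij}\ell_j\ell_i^{*}]\|\le\lambda_1$. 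I expect this to be the hard part: the matrix $[\epsilon_{ij}\ell_j\ell_i^{*}]$ is a genuinely crossed object, and the naive factorization only returns the weak bound $d$, so one must exploit the $\epsilon$-relations jointly with the spectral radius of the adjacency matrix. Concretely, I would evaluate the quadratic form of $\sum_i\ell_i\ell_i^{*}$ on $\epsilon$-admissible words: which letters can be removed from the front of a word is governed by the neighbours in the graph, and summing over $i$ turns the estimate into the Perron--Frobenius bound $\epsilon\le\lambda_1 I$ (the $+1$ coming from the diagonal terms $\ell_i^{*}\ell_i=1$), which yields the first inequality. For the second inequality I would sharpen this for a connected regular graph by splitting $\mathbb{C}^d$ along the Perron eigenvector $\tfrac1{\sqrt d}(1,\dots,1)$ (eigenvalue $\lambda_1$) and its orthogonal complement, on which $\epsilon$ acts with norm at most $\lambda_2$; optimizing the resulting two-term estimate—this is where the factor $\tfrac{d}{d-(\lambda_1-\lambda_2)}$ and the hypothesis of degree below $d-1$ (keeping the denominator positive) enter—upgrades the bound on $\nu$ to $\tfrac{d(\lambda_2+1)}{d-(\lambda_1-\lambda_2)}$, giving the second claimed inequality.
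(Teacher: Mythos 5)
Your reduction steps are all correct as far as they go: the triangle inequality, the factorization estimate via the column contraction $\mathbf{c}$, and the Gram-matrix identity $\|[\ell_i^*\ell_j]\|=\|\sum_i\ell_i\ell_i^*\|$ are valid, and they do reduce both inequalities to bounding $\nu=\|\sum_i\ell_i\ell_i^*\|$. The genuine gap is that the two bounds on $\nu$ --- the only place where the graph spectrum must enter --- are asserted rather than proved, and the mechanism you sketch for them does not work as stated. The operator $\sum_i\ell_i\ell_i^*$ is diagonal in the basis $\{e_w\}$: its eigenvalue at $e_w$ is $c_w$, the number of distinct letters that can be commuted to the front of $w$. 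This is a combinatorial quantity of the trace monoid, not a quadratic form in $\epsilon$, so the Perron--Frobenius bound $\epsilon\le\lambda_1 I_d$ by itself says nothing about it; what is actually needed is the lemma that the front letters of any word form a clique of $\epsilon$ (two letters of $w$ that do not commute cannot both reach the front), whence $c_w\le\omega(\epsilon)\le\lambda_1+1$. You neither state nor prove this lemma. Worse, for the second inequality your reduction is self-defeating: one has $\nu\ge\omega(\epsilon)$ (take $w=i_1\cdots i_k$ the product of a maximum clique, so that $c_w=\omega(\epsilon)$), hence any proof that $\nu\le\frac{d(\lambda_2+1)}{d-(\lambda_1-\lambda_2)}$ is in particular a proof of Hoffman's ratio bound $\omega(\epsilon)\le\frac{d(\lambda_2+1)}{d-(\lambda_1-\lambda_2)}$ for regular graphs --- a real theorem of spectral graph theory that your sketch does not contain. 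Indeed, ``splitting $\mathbb{C}^d$ along the Perron eigenvector'' cannot even be applied to $\nu$: after the Gram reduction, $\nu$ is the norm of an operator on $\mathcal{F}_\epsilon$ alone, and there is no $\mathbb{C}^d$ tensor leg left to split.

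The paper's proof shows why this difficulty is avoidable: it never passes to $\nu$. Using only that $\ell_i^*\ell_j=0$ for $i\neq j$ with $\epsilon_{ij}=0$ and that $\ell_i^*\ell_i=I$, one gets the exact identity $C^*C=L^*\bigl((\epsilon+I_d)\otimes 1\bigr)L$ with $L={}^t(a_1\otimes\ell_1,\ldots,a_d\otimes\ell_d)$, i.e.\ the scalar matrix $\epsilon+I_d$ stays sandwiched between column operators. Then $\epsilon+I_d\le(\lambda_1+1)I_d$ and $L^*L=\sum_i a_i^*a_i\otimes I$ give the first bound with no combinatorics at all, and the second follows from the spectral decomposition of $\epsilon+I_d$ together with the self-referential identity $L^*P_1L=\frac1d\,C^*C$ (where $P_1=\frac1d E_d$ by connectedness and regularity), which lets one subtract $\frac{\lambda_1-\lambda_2}{d}C^*C$ from both sides --- a bootstrap unavailable in your formulation. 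For what it is worth, if you do supply the missing clique lemma, your route proves the stronger estimate with constant $2\sqrt{\omega(\epsilon)}$ (the improvement from the work of Santos et al.\ quoted in the paper's remarks), from which both stated inequalities follow only via the two graph-theoretic facts above; so as written, the heart of the theorem is missing from your argument.
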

This result can be seen as
an analog
of the operator-valued Khintchine inequality observed by Haagerup and Pisier \cite{MR1240608} for the free group $\mathbb{F}_d$;
\[\left \|\sum_{|g|=1} a_g \otimes \lambda(g) \right \|_{B(H \otimes l^2(\mathbb{F}_d))}\le 2 \max \left \{\left\|\sum_{|g|=1}a_g^*a_g\right\|_{B(H)}^{\frac{1}{2}}, \ \left\|\sum_{|g|=1}a_g a_g^*\right\|_{B(H)}^{\frac{1}{2}} \right \} \]
where $a_g \in B(H)$. We note that Caspers, Klisse, and Larsen \cite{MR4156133} also proved a Khintchine-type inequality for the graph product. Although they use the information of cliques (that is, complete subgraphs), our inequality uses the eigenvalues of the adjacency matrix and is also optimal in several cases (see Remark \ref{OptimalCase}).

For lower bounds, we were unable to use the eigenvalues of the adjacency matrix $\epsilon$. Instead, we have a bound that involves the cliques in $\epsilon$.
\begin{thm}\label{CliqueEstimate}
Let $s_1,\ldots,s_d$ be $\epsilon$-free semicircular elements. Then we have
\[
\left\|\sum_{i=1}^d s_i\right\| \ge \max \left\{\sqrt{4\omega(\epsilon)^2+ d - \omega(\epsilon)},\ 2\sqrt{d} \right\}
\]
where $\omega(\epsilon)$ is the largest size of a clique in $\epsilon$. 
\end{thm}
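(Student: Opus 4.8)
The statement is the maximum of two independent lower bounds, so the plan is to establish $\left\|\sum_{i=1}^d s_i\right\|\ge 2\sqrt d$ and $\left\|\sum_{i=1}^d s_i\right\|\ge\sqrt{4\omega^2+d-\omega}$ separately, where $\omega:=\omega(\epsilon)$.

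For the bound $2\sqrt d$ I would work at the level of moments, using the combinatorial description of Speicher and Wysocza\'nski \cite{MR3552222}: the mixed moment $\tau(s_{i_1}\cdots s_{i_{2n}})$ equals the number of pair partitions $\pi$ of $\{1,\dots,2n\}$ whose blocks join equal colors and whose crossings occur only between commuting colors. Expanding $\tau\left(\left(\sum_i s_i\right)^{2n}\right)=\sum_{i_1,\dots,i_{2n}}\tau(s_{i_1}\cdots s_{i_{2n}})$ and exchanging the two summations, each \emph{non-crossing} pair partition contributes exactly $d^{\,n}$ (its $n$ blocks may be colored arbitrarily, with no crossing condition to satisfy), while every crossing partition contributes a nonnegative amount. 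Since there are $C_n$ (Catalan many) non-crossing pair partitions, $\tau\left(\left(\sum_i s_i\right)^{2n}\right)\ge C_n d^{\,n}$. Taking $2n$-th roots and using $\left\|X\right\|=\lim_n \tau(X^{2n})^{1/2n}$ for self-adjoint $X$ together with $C_n^{1/2n}\to 2$ gives $\left\|\sum_i s_i\right\|\ge 2\sqrt d$.

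For the clique bound, fix a clique $\mathcal C$ with $|\mathcal C|=\omega$. Since all colors inside $\mathcal C$ commute, $\{s_k:k\in\mathcal C\}$ are classically independent standard semicirculars, so the algebra $M_{\mathcal C}$ they generate is commutative and identifies with $L^\infty$ of the $\omega$-fold product of the semicircle law, with the vacuum $\Omega$ corresponding to the constant function $1$. Under this identification $A:=\sum_{k\in\mathcal C}s_k$ is multiplication by $\sum_k x_k$ on $[-2,2]^{\mathcal C}$, hence $\left\|A\right\|=2\omega$; for any $\delta>0$ I would pick a bounded nonnegative $g$ concentrated near the corner $x_k\equiv 2$ and set $\phi:=g\,\Omega=m\Omega\in M_{\mathcal C}\Omega$ (with $m=g$, $z:=mm^*=g^2$), so that $\langle\phi,A^2\phi\rangle\ge(4\omega^2-\delta)\|\phi\|^2$. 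Writing $B:=\sum_{i\notin\mathcal C}s_i$, the whole computation rests on the orthogonality identity $\tau(a\,s_is_j)=\delta_{ij}\tau(a)$ (and $\tau(a\,s_i)=0$) for $a\in M_{\mathcal C}$ and $i,j\notin\mathcal C$. Granting it, the cross terms vanish, $\langle\phi,(AB+BA)\phi\rangle=0$, while $\langle\phi,B^2\phi\rangle=\tau(zB^2)=\sum_{i,j\notin\mathcal C}\tau(z\,s_is_j)=(d-\omega)\tau(z)=(d-\omega)\|\phi\|^2$. Since $A+B$ is self-adjoint, $\|(A+B)\phi\|^2=\langle\phi,A^2\phi\rangle+\langle\phi,B^2\phi\rangle\ge(4\omega^2-\delta+d-\omega)\|\phi\|^2$, and letting $\delta\to0$ yields $\left\|\sum_i s_i\right\|\ge\sqrt{4\omega^2+d-\omega}$. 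Combining the two bounds finishes the proof.

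The main obstacle is the orthogonality identity $\tau(a\,s_is_j)=\delta_{ij}\tau(a)$ in the \emph{mixed} regime, when $i\notin\mathcal C$ is adjacent to some but not all of $\mathcal C$, so that $s_i$ is neither free from nor classically independent of $M_{\mathcal C}$. I expect the cleanest route is again the pair-partition formula: for $a$ a word in $\{s_k:k\in\mathcal C\}$, the two terminal legs colored $i,j\notin\mathcal C$ have no available partner among the colors of $a$, so they must be paired with each other, forcing $i=j$; this terminal pair sits at the boundary and neither crosses nor affects the $\epsilon$-admissibility of the inner pairings of $a$, which sum to $\tau(a)$. One then extends from words to all $a\in M_{\mathcal C}$ by $\|\cdot\|_2$-continuity. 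Verifying carefully that inserting the terminal pair leaves the admissibility of the inner pairings unchanged is the delicate point.
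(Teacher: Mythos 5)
Your proposal is correct, and while your proof of the bound $2\sqrt{d}$ is essentially the paper's own argument (non-crossing pair partitions are admissible for every coloring, crossing ones contribute nonnegatively, hence $\tau\bigl[\bigl(\sum_i s_i\bigr)^{2n}\bigr]\ge C_n d^n$), your proof of the clique bound takes a genuinely different route. The paper never leaves the Fock space $\mathcal{F}_{\epsilon}$: for a clique $\{i_1,\dots,i_k\}$ it uses the explicit test vectors $\xi_N=\sum_{j_1,\dots,j_k=1}^N e_{i_1^{j_1}\cdots i_k^{j_k}}$, checks that each clique generator acts on $\xi_N$ as multiplication by $2$ up to boundary terms of squared norm $O(N^{k-1})$, that each $s_i$ with $i\notin\{i_1,\dots,i_k\}$ acts as an isometry whose range is orthogonal to everything else, obtains $\bigl\|\sum_i s_i\xi_N\bigr\|^2=4k^2(N-2)^k+(d-k)N^k+O(N^{k-1})$, and lets $N\to\infty$. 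You instead argue abstractly: you identify the clique algebra $M_{\mathcal{C}}$ with $L^\infty$ of the $\omega$-fold product of semicircle laws (valid, since $\epsilon$-freeness restricted to a clique is exactly tensor independence), concentrate a test function near the corner $(2,\dots,2)$, and add the off-clique contribution $(d-\omega)\|\phi\|^2$ Pythagorean-style via your orthogonality identity. The two proofs are the same computation in spirit --- under your identification the paper's $\xi_N$ is precisely the corner-concentrated function $\prod_{m}\sum_{j=1}^N U_j(x_{i_m})$, with $U_j$ the Chebyshev polynomials of the second kind --- but yours is more flexible and makes the constant $4\omega^2=\bigl\|\sum_{k\in\mathcal{C}}s_k\bigr\|^2$ conceptually transparent, at the cost of two points the explicit computation never meets. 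First, you silently use traciality of the vacuum state when you replace $\langle\phi,B^2\phi\rangle=\tau(g^*B^2g)$ by $\tau(zB^2)$ (and similarly for the cross terms); this is true for $\epsilon$-free families (graph products of traces are traces), but it should be cited or avoided, e.g.\ by proving $\tau(g^*s_is_jg)=\delta_{ij}\tau(g^*g)$ directly --- your pairing argument applies verbatim, since an adjacent pair can never be crossed by another block. Second, the orthogonality identity you flag as the delicate point is indeed true and your interval-block argument for it is sound; note that it also has a one-line proof in the paper's model: for $i\notin\mathcal{C}$ and $w$ a word in clique letters one has $l_i^*e_w=0$, so $s_i\phi=l_i\phi$ for $\phi\in\overline{M_{\mathcal{C}}\Omega}$, and the subspaces $l_i\overline{M_{\mathcal{C}}\Omega}$, $i\notin\mathcal{C}$, together with $\overline{M_{\mathcal{C}}\Omega}$ itself, are mutually orthogonal because equivalent words have the same multiset of letters.
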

In this theorem, the lower bound $\sqrt{4\omega(\epsilon)^2+ d - \omega(\epsilon)}$ is a better estimate than the trivial bound $2\sqrt{d}$ when $\omega(\epsilon)\ge \frac{1+\sqrt{1+48d}}{8}$. 

For the proofs, we use a representation of $\epsilon$-free semicircles on a kind of Fock spaces. This method has been applied to estimate operator norms of polynomials in $q$-Gaussian system \cite{MR1811255} and twisted-Gaussian system \cite{MR2180382}. We hope that our results will be a trigger for further studies of the analytic properties of graph-independent random variables.

\subsection*{Acknowledgement}
We are very grateful to Liang Zhao for multiple discussions during his visit to Kyoto, and in particular for explaining $\epsilon$-freenss to the second-named author. The authors thank Roland Speicher for the useful discussions and for telling us about the unpublished version of his paper. We also thank Zhiyuan Yang for telling us about the operator-valued Khintchine-type inequality. We are grateful to Patrick O. Santos for useful discussions on the paper \cite{santos2025khintchineinequalitiestracemonoids}.  
B. C. was supported by JSPS Grant-in-Aid Scientific Research (B) no. 21H00987, and Challenging Research (Exploratory) no. 23K17299.
A. M. was supported by JSPS Research Fellowships for Young Scientists, JSPS KAKENHI Grant Number JP 22J12186, JP 22KJ1817, and JSPS Overseas Research Fellowship.

\section{Preliminaries}

\subsection{$\epsilon$-independence}
Throughout the paper, $\epsilon$ denotes a simple (i.e., no multiple edges) undirected graph without self-loops. Let $d\in \mathbb{N}$ be the number of vertices in $\epsilon$, and we assign different numbers in $\{1,\ldots,d\}$ to the vertices in $\epsilon$. We also identify the graph $\epsilon$ with its adjacency matrix $\epsilon= (\epsilon_{ij})_{1\le i,j\le d}$, which is a symmetric matrix whose $(i,j)$ entries are $1$ if the $i$-th vertex is connected to the $j$-th vertex and $0$ otherwise. Note that diagonal entries of the adjacency matrix are $0$ since $\epsilon$ does not have self-loops. Based on the definition in \cite{MR3552222} by Speicher and Wysocza\'{n}ski, the $\epsilon$-freeness is defined as follows.
\begin{dfn}
For $n\in \mathbb{N}$, let $I_n^{\epsilon}$ be the set of elements $(i_1,\ldots,i_n) \in \{1,\ldots,d\}^n$ such that if there exists $1\le k<l \le n$ with  $i_k=i_l$, then there is a $k<m<l$ with 
$i_k\neq i_m$ and $\epsilon_{i_k,i_m}=0$. Let $(\mathcal{A},\phi)$ be a non-commutative probability space. Then, we say unital subalgebras $\{\mathcal{A}_i\}_{i=1}^d$ of $\mathcal{A}$ are $\epsilon$-free independent if they satisfy
\begin{itemize}
 \item If $\epsilon_{ij}=1$, then $\mathcal{A}_i$ and $\mathcal{A}_j$ commute.  

\item For each $n \in \mathbb{N}$, whenever we take $a_k \in \mathcal{A}_{i_k}$ ($k=1,\ldots,n)$ such that $\phi(a_k)=0$ and $(i_1,\ldots,i_n) \in I_n^{\epsilon}$, we have
\[\phi[a_1a_2,\cdots a_n]=0.\]
  
\end{itemize}
\end{dfn}

\subsection{Representation of $\epsilon$-free semicircles}\label{Representation}
We briefly discuss a representation of $\epsilon$-free semicircular random variables, which has been known by Speicher and Wysocza\'{n}ski \cite[Proposition 5.1 in the unpublished version]{MR3552222}, and Magee and Thomas \cite[Section 2.3.]{MT23}. In fact, Speicher and Wysocza\'{n}ski \cite[Proposition 5.1 in the unpublished version]{MR3552222} showed that $\epsilon$-free semicircles have the same joint (non-commutative) distributions as the mixed $(\epsilon_{ij})$-Gaussians which are represented as the sum of creation and annihilation operators $l_i+l_i^*$ satisfying the following mixed $(\epsilon_{ij})$-relation \cite{MR1213608,MR1289833};
\[l_i^*l_j-\epsilon_{ij}l_jl_i^*=\delta_{ij}I.\]
\begin{rem}
    In \cite[Theorem 29]{MR2180382}, Kr\'{o}lak shows a Haagerup-type inequality for the mixed $(q_{i,j})$-Gaussians with $\max|q_{i,j}|<1$ that estimates the operator norm of the sum of the generators. 
    However, we cannot directly apply this inequality to $(\epsilon_{ij})$-Gaussians since $\epsilon_{ij}$ is $0$ or $1$. Thus, we need to treat $\epsilon$-free semicircles separately.
\end{rem}
Here, we give an
alternative
construction of $\epsilon$-free semicircles that simplifies their manipulation. Let $\epsilon$ be given. We define an equivalence relation, denoted by $\sim_{\epsilon}$, on the set of words $[d]^*=\bigsqcup_{n=0}^\infty \{1,\ldots,d\}^n$ with the empty word $0$ (that is, $\{1,\ldots,d\}^0=\{0\}$) by
$$u i j v \sim_{\epsilon} u j i v \quad \text{if and only if} \quad \epsilon_{i j} =1$$
where $u,v \in [d]^*$.
Let $[d]_{\epsilon}^*$ denote the set of equivalence classes of $[d]^*$ with respect to $\sim_{\epsilon}$. Let us consider the following Hilbert space
$$\mathcal{F}_{\epsilon} = \bigoplus_{w \in [d]_{\epsilon}^*} \mathbb{C} e_w$$
where $\mathbb{C}e_w$ is a one-dimensional Hilbert space with unit vector $e_w$ and we see $e_0$ as the vacuum.
As in the full Fock space, we define left creation operators $\{l_i\}_{i=1}^d$ by
$$l_i e_w = e_{iw}.$$
Note that $l_i$'s are isometries on $\mathcal{F}_{\epsilon}$ since $i u \not\sim_{\epsilon} i v$ if $u \not\sim_{\epsilon} v$.
In this setting, the left annihilation operator $l_i^*$ is given by
$$l_i^* e_w = \begin{cases} e_{w'} \ \text{if} \ w\sim_{\epsilon} iw' \\ 0 \ \text{otherwise.}  \end{cases}$$
Note that since  we have $u\sim_{\epsilon}v$ if $i u \sim_{\epsilon}iv$, $l_i^*$ is well-defined and satisfies
$$\langle l_i^*e_u, e_v\rangle = \langle e_u, l_i e_v \rangle.$$
Moreover, one can directly check the $(\epsilon_{ij})$-relation
\[l_i^*l_j-\epsilon_{ij}l_jl_i^*=\delta_{ij}I.\]
Put $s_i = l_i + l_i^*$. One sees that each $s_i$ has the standard semicircle distribution with respect to the vacuum state.
This construction is a rephrasing of the construction in \cite{MR1213608,MR1289833} (with $q_{ij}=\epsilon_{ij}$) and \cite[Section 2.1]{MT23}
using an orthonormal basis $\{e_w\}_{w\in[d]^*_{\epsilon}}$. Under this setting, \cite[Proposition 5.1 in the unpublished version]{MR3552222} and \cite[Lemma 2.8]{MT23}
 can be reformulated as follows.
 \begin{thm}
In the setting above,
$\{s_i\}_{i=1}^d$ forms an $\epsilon$-free family of semicircles with respect to the vacuum state $\tau(\cdot)=\langle \ \cdot \ e_0,e_0\rangle$. 
\end{thm}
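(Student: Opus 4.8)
The plan is to verify the three defining features of an $\epsilon$-free semicircular family with respect to $\tau=\langle\,\cdot\,e_0,e_0\rangle$: that each $s_i$ is semicircular, that $s_i$ and $s_j$ commute whenever $\epsilon_{ij}=1$, and that the mixed centered moments vanish along index tuples in $I_n^\epsilon$. The first two are short. For semicircularity I would note that, writing $i^m$ for the word of $m$ copies of the single letter $i$, the subspace $\overline{\mathrm{span}}\{e_{i^m}:m\ge0\}$ is invariant under $s_i$, that $e_0$ is cyclic there, and that on it $s_i$ acts as the Jacobi operator of the standard semicircle law; hence the $\tau(s_i^n)$ are the Catalan/semicircle moments. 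For commutation I would use that the creation operators satisfy $l_il_j=l_jl_i$ when $\epsilon_{ij}=1$ (since $ijw\sim_\epsilon jiw$), take adjoints to get $l_i^*l_j^*=l_j^*l_i^*$, and combine these with the relation $l_i^*l_j=\epsilon_{ij}l_jl_i^*+\delta_{ij}I$ to obtain $s_is_j=s_js_i$.

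The heart of the matter is the vanishing of alternating centered moments, which I would reduce to monomials via Chebyshev polynomials. Writing $U_m$ for the Chebyshev polynomials of the second kind normalized so that $U_0=1$, $U_1(x)=x$, and $U_{m+1}=xU_m-U_{m-1}$, one checks $\tau(U_m(s_i))=\delta_{m,0}$, so the centered part of $\mathcal{A}_i$ is spanned by $\{U_m(s_i):m\ge1\}$; by multilinearity it then suffices to treat $a_k=U_{m_k}(s_{i_k})$ with every $m_k\ge1$. The key computational lemma is that, for any letter $i$ and any word $w$ with $l_i^*e_w=0$ (equivalently $w\not\sim_\epsilon iw'$ for every $w'$), one has $U_m(s_i)e_w=e_{i^m w}$, where $i^m w$ denotes $w$ with $m$ copies of $i$ prepended. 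I would prove this by induction on $m$ from the three-term recursion, using $s_ie_{i^k w}=e_{i^{k+1}w}+e_{i^{k-1}w}$ for $k\ge1$ and $s_ie_w=e_{iw}$ for $k=0$, the annihilation term dying exactly because $l_i^*e_w=0$.

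Granting this lemma, I would run a downward induction on $k$ to establish
\[
a_k a_{k+1}\cdots a_n\, e_0 = e_{\,i_k^{m_k} i_{k+1}^{m_{k+1}}\cdots i_n^{m_n}},
\]
a single basis vector indexed by the concatenation of blocks. The base case $a_ne_0=U_{m_n}(s_{i_n})e_0=e_{i_n^{m_n}}$ is immediate. For the inductive step I must check that $l_{i_{k-1}}^*e_w=0$ for $w=i_k^{m_k}\cdots i_n^{m_n}$, so that the lemma applies and $a_{k-1}$ merely prepends a block of $i_{k-1}$'s. This is where the hypothesis $(i_1,\dots,i_n)\in I_n^\epsilon$ enters: it forces consecutive indices to be distinct, and if $i_{k-1}$ occurs in $w$ at all, say first in the block at position $j$ with $i_j=i_{k-1}$, the defining property of $I_n^\epsilon$ furnishes an index $m$ with $k\le m<j$ and $\epsilon_{i_j,i_m}=0$; the block $i_m$ then lies to the left of, and fails to commute with, $i_j$, so no occurrence of $i_{k-1}$ can be brought to the front (later occurrences are blocked by block $j$ itself, since $\epsilon_{i_{k-1},i_{k-1}}=0$). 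Hence $l_{i_{k-1}}^*e_w=0$ and the induction closes. Taking $k=1$ yields $a_1\cdots a_ne_0=e_{\,i_1^{m_1}\cdots i_n^{m_n}}$, a nonempty word, which is orthogonal to $e_0$; therefore $\tau(a_1\cdots a_n)=0$.

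I expect the main obstacle to be precisely this last translation: verifying that the combinatorial condition defining $I_n^\epsilon$ coincides with the Fock-space condition ``the next letter can never be annihilated'' along the reduction. Everything else, namely semicircularity, commutation, and the Chebyshev reduction, is routine, so the care must go into making the equivalence-class bookkeeping for $\sim_\epsilon$ and the ``first occurrence / blocking letter'' argument fully rigorous.
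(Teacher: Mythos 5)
Your proposal is correct, but it takes a genuinely different route from the paper, for the simple reason that the paper does not prove this theorem at all: it checks that the operators $l_i$ on $\mathcal{F}_\epsilon$ satisfy the mixed commutation relation $l_i^*l_j-\epsilon_{ij}l_jl_i^*=\delta_{ij}I$, and then presents the theorem as a reformulation of cited results of Speicher--Wysocza\'{n}ski (Proposition 5.1 of the unpublished version of \cite{MR3552222}) and Magee--Thomas (\cite[Lemma 2.8]{MT23}), which identify the vacuum joint distribution of such mixed Gaussians with that of $\epsilon$-free semicircles. Your argument is instead a self-contained verification in the style of Haagerup's classical proof of freeness for creation-plus-annihilation operators: semicircularity from the Jacobi-operator restriction to $\overline{\mathrm{span}}\{e_{i^m}\}$, commutation of the algebras from the relations $l_il_j=l_jl_i$, $l_i^*l_j=l_jl_i^*$ when $\epsilon_{ij}=1$, and vanishing of moments along $I_n^\epsilon$ via the Chebyshev reduction and the key lemma $U_m(s_i)e_w=e_{i^mw}$ whenever $l_i^*e_w=0$. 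All the steps check out: the three-term recursion computation is right; $I_n^\epsilon$ does force consecutive indices to be distinct (otherwise no intermediate index $m$ exists); and your first-occurrence/blocking argument correctly produces, for every occurrence of $i_{k-1}$ in $i_k^{m_k}\cdots i_n^{m_n}$, a strictly earlier letter that fails to commute with it. The single ingredient you rightly flag as needing care---that $w\sim_\epsilon iw'$ for some $w'$ if and only if some occurrence of the letter $i$ in $w$ has all earlier letters commuting with $i$---is a standard fact about trace monoids (elementary swaps preserve the relative order of occurrences of any two non-commuting letters, so an occurrence can reach the front exactly when nothing before it is dependent on it), proved by a routine induction on the number of swaps; once that is written down your proof is complete. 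What your route buys is self-containedness, no reliance on an unpublished reference, and an explicit description of how centered Chebyshev blocks act on the vacuum---which is also the mechanism underlying the Khintchine-type estimates in the rest of the paper; what the paper's route buys is brevity and a direct link to the established identification of $\epsilon$-free semicircles with mixed $(\epsilon_{ij})$-Gaussians.
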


\section{Main results}
Throughout this section, let $H$ be a Hilbert space and $B(H)$ be the set of bounded operators on $H$. In addition, $\mathcal{F}_{\epsilon}$ denotes the Hilbert space described in the previous section. 

First of all, we prove an analog of the operator-valued Khintchine inequality observed by Haagerup and Pisier \cite{MR1240608} for the free group $\mathbb{F}_d$;
\[\left \|\sum_{|g|=1} a_g \otimes \lambda(g) \right \|_{B(H \otimes l^2(\mathbb{F}_d))}\le 2 \max \left \{\left\|\sum_{|g|=1}a_g^*a_g\right\|_{B(H)}^{\frac{1}{2}}, \ \left\|\sum_{|g|=1}a_g a_g^*\right\|_{B(H)}^{\frac{1}{2}} \right \} \]
where $a_g \in B(H)$.
\begin{thm}
Let $s_1,\dots,s_d$ be standard $\epsilon$-free semicircles. Then, for any $a_1,\ldots,a_d \in B(H)$, we have
\[\left \|\sum_{i=1}^d a_i \otimes s_i \right \|_{B(H \otimes \mathcal{F}_{\epsilon})}\le 2 \sqrt{\lambda_1+1} \max \left \{\left\|\sum_{i=1}^d a_i^*a_i\right\|_{B(H)}^{\frac{1}{2}}, \ \left\|\sum_{i=1}^d a_i a_i^*\right\|_{B(H)}^{\frac{1}{2}} \right \} \]
where $\lambda_1$ is the largest eigenvalue of the adjacency matrix $\epsilon$. Moreover, when $\epsilon$ is a connected regular graph with a degree less than $d-1$, we also have
\[\left \|\sum_{i=1}^d a_i \otimes s_i \right \|_{B(H \otimes \mathcal{F}_{\epsilon})}\le 2  \sqrt{\frac{d(\lambda_2+1)}{d-(\lambda_1-\lambda_2)}}\max \left \{\left\|\sum_{i=1}^d a_i^*a_i\right\|_{B(H)}^{\frac{1}{2}}, \ \left\|\sum_{i=1}^d a_i a_i^*\right\|_{B(H)}^{\frac{1}{2}} \right \} \]
where $\lambda_2$ is the second largest eigenvalue of the adjacency matrix $\epsilon$.
\end{thm}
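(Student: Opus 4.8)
The plan is to reduce everything to the single creation part $C := \sum_{i=1}^d a_i \otimes l_i$ and then to extract the adjacency-matrix eigenvalues out of $C^*C$ by a spectral decomposition. First I would write $s_i = l_i + l_i^*$ and apply the triangle inequality, so that $\|\sum_i a_i \otimes s_i\| \le \|C\| + \|\sum_i a_i \otimes l_i^*\|$. Since $\|\sum_i a_i \otimes l_i^*\| = \|(\sum_i a_i \otimes l_i^*)^*\| = \|\sum_i a_i^* \otimes l_i\|$, the second term is of the same shape as the first with $a_i$ replaced by $a_i^*$; bounding it will produce $\|\sum_i a_i a_i^*\|^{1/2}$ in place of $\|\sum_i a_i^* a_i\|^{1/2}$, and the final factor $2$ together with the maximum comes from combining the two estimates. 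Thus it suffices to prove $\|C\| \le \sqrt{\lambda_1+1}\,\|\sum_i a_i^* a_i\|^{1/2}$ and, in the regular case, its refinement.

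Next I would compute $C^*C$ using the mixed relation $l_i^* l_j = \delta_{ij} I + \epsilon_{ij} l_j l_i^*$, which gives $C^* C = \sum_i a_i^* a_i \otimes I + E$ with $E = \sum_{i \neq j} \epsilon_{ij}\, a_i^* a_j \otimes l_j l_i^*$. The crucial step, and the one I expect to carry the whole argument, is to diagonalize the real symmetric matrix $\epsilon = \sum_k \lambda_k u_k u_k^{\mathsf T}$ in an orthonormal eigenbasis $\{u_k\}$ and to set $T_k := \sum_j (u_k)_j\, a_j \otimes l_j$. Substituting $\epsilon_{ij} = \sum_k \lambda_k (u_k)_i (u_k)_j$ into $\sum_{ij}\epsilon_{ij}\, a_i^* a_j \otimes l_i^* l_j$ and using $\epsilon_{ii}=0$ together with $l_i^* l_j = \epsilon_{ij}\, l_j l_i^*$ for $i\neq j$ yields the identity $E = \sum_k \lambda_k\, T_k^* T_k$. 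Likewise, completeness $\sum_k (u_k)_i (u_k)_j = \delta_{ij}$ and $l_i^* l_i = I$ give $\sum_k T_k^* T_k = \sum_i a_i^* a_i \otimes I$. Since each $T_k^* T_k \ge 0$, I would discard the terms with $\lambda_k < 0$ and bound the rest using $\lambda_k \le \lambda_1$, obtaining $C^* C \le \sum_i a_i^* a_i \otimes I + \lambda_1 \sum_k T_k^* T_k = (\lambda_1+1)\sum_i a_i^* a_i \otimes I$. Taking norms gives the first bound.

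For the regular connected case I would keep the top eigenvalue separate. Writing $C^* C = \sum_i a_i^* a_i \otimes I + \lambda_1 T_1^* T_1 + \sum_{k\ge 2}\lambda_k T_k^* T_k$, I would bound $\lambda_k \le \lambda_2$ for $k\ge 2$ and use $\sum_{k\ge 2}T_k^* T_k = \sum_i a_i^* a_i \otimes I - T_1^* T_1$ to get $C^* C \le (1+\lambda_2)\sum_i a_i^* a_i \otimes I + (\lambda_1-\lambda_2)\, T_1^* T_1$. Here regularity enters through the Perron eigenvector $u_1 = \mathbf{1}/\sqrt{d}$, so that $T_1 = \tfrac{1}{\sqrt d}\, C$ and hence $T_1^* T_1 = \tfrac1d\, C^* C$. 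Substituting this self-referential identity and solving the resulting operator inequality for $C^*C$ gives $\tfrac{d-(\lambda_1-\lambda_2)}{d}\, C^* C \le (1+\lambda_2)\sum_i a_i^* a_i \otimes I$, which is the claimed bound, provided the denominator $d-(\lambda_1-\lambda_2)$ is positive. I would verify this positivity from the hypotheses: since $\lambda_1$ equals the degree and $\operatorname{tr}\epsilon = \sum_k \lambda_k = 0$, if one had $\lambda_2 \le \lambda_1 - d$ then $-\lambda_1 = \sum_{k\ge 2}\lambda_k \le (d-1)\lambda_2 \le (d-1)(\lambda_1 - d)$, which rearranges to $\lambda_1 \ge d-1$, contradicting $\lambda_1 < d-1$; hence $\lambda_1 - \lambda_2 < d$.

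Assembling the two creation estimates through the triangle inequality and the maximum then yields both displayed inequalities. The only delicate points I anticipate are getting the signs right when discarding the negative-eigenvalue terms and confirming the positivity of $d-(\lambda_1-\lambda_2)$, both of which are settled above; the rest is the bookkeeping of the spectral identity $E = \sum_k \lambda_k T_k^* T_k$.
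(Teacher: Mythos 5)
Your proposal is correct and follows essentially the same route as the paper: triangle inequality to reduce to the creation part, the relation $l_i^*l_j=\delta_{ij}I+\epsilon_{ij}l_jl_i^*$ to express $C^*C$ as the quadratic form $L^*(\epsilon+I_d)L$ in the adjacency matrix (your rank-one spectral decomposition $\sum_k\lambda_k T_k^*T_k$ is exactly this in eigenbasis coordinates), and, in the connected regular case, the identity $T_1^*T_1=\frac{1}{d}C^*C$ coming from the Perron eigenvector followed by solving the resulting self-referential operator inequality. The only difference is that you explicitly verify $d-(\lambda_1-\lambda_2)>0$ via the trace argument, a point the paper leaves implicit; this is a welcome addition rather than a deviation.
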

\begin{proof}
We use the 
representation $s_i=l_i+l_i^*$ on $\mathcal{F}_{\epsilon}$
in section \ref{Representation}
where we have $l_i^*l_j-\epsilon_{ij} l_jl_i^* =\delta_{ij} I$.
Then, we have
\begin{align*}
\left \|\sum_{i=1}^d a_i \otimes s_i \right \|_{B(H \otimes \mathcal{F}_{\epsilon})} &\le \left \|\sum_{i=1}^d a_i \otimes l_i \right \|_{B(H \otimes \mathcal{F}_{\epsilon})} + \left \|\sum_{i=1}^d a_i \otimes l_i^* \right \|_{B(H \otimes \mathcal{F}_{\epsilon})}\\
&=  \left \|\sum_{i=1}^d a_i \otimes l_i \right \|_{B(H \otimes \mathcal{F}_{\epsilon})} +  \left \|\sum_{i=1}^d a_i^* \otimes l_i \right \|_{B(H \otimes \mathcal{F}_{\epsilon})}.
\end{align*}
If $\epsilon_{ij}=0$, then we have $l_i^*l_j=0$ ($i\neq j$) or $l_i^*l_i=I$. Thus we have
\begin{align*}
  \left(\sum_{i=1}^d a_i^* \otimes l_i^* \right)\left( \sum_{j=1}^d a_j \otimes l_j\right) &= \sum_{i=1}^d a_i^*a_i \otimes l_i^*l_i + \sum_{i,j=1}^d \epsilon_{ij}a_i^*a_j \otimes l_i^*l_j \\
  &= L^* (\epsilon +I_d) L.
\end{align*}
where $L={}^t(a_1\otimes l_1,\ldots a_d\otimes l_d)$.  Since $\epsilon + I_d\le (\lambda_1+1)I_d$, we have 
\begin{align*}
 L^* (\epsilon +I_d) L  
 &\le  
 (\lambda_1+1)L^*L \\
 &= (\lambda_1+1) \sum_{i=1}^d a_i^*a_i \otimes I. 
 \end{align*}
 By taking operator norms on both sides, we have
 \begin{align*}
  \left \|\sum_{i=1}^d a_i \otimes l_i \right \|^2_{B(H \otimes \mathcal{F}_{\epsilon})} &\le (\lambda_1+1) \left\|\sum_{i=1}^d a_i^*a_i \otimes I\right\|_{B(H\otimes \mathcal{F}_{\epsilon})} \\
  &= (\lambda_1+1) \left\|\sum_{i=1}^d a_i^*a_i \right\|_{B(H)}
  \end{align*}
  By replacing $a_i$ by $a_i^*$, we also have
  \[ \left \|\sum_{i=1}^d a_i^* \otimes l_i \right \|_{B(H \otimes \mathcal{F}_{\epsilon})}^2\le (\lambda_1+1) \left\|\sum_{i=1}^d a_ia_i^* \right\|_{B(H)}. \]
  Moreover, when $\epsilon$ is a connected regular graph with a degree less than $d-1$, we consider the spectral decomposition $\epsilon +I_d = \sum_{i=1}^d (\lambda_i+1) P_i$ where $\lambda_1\ge\lambda_2\ge \cdots \ge \lambda_d$ are the eigenvalues of $\epsilon$ and  $P_i$ is the spectral projection with respect to $\lambda_i$. Since $\epsilon$ is a connected regular graph,  $P_1$ is the matrix whose entries are all $\frac{1}{d}$, and we have $L^*P_1L =\frac{1}{d} (\sum_{i=1}^d a_i^* \otimes l_i^*)( \sum_{j=1}^d a_j \otimes l_j)$
  \begin{align*}
   \sum_{i=1}^d (\lambda_i+1) L^* P_i L &= (\lambda_1+1) L^* P_1 L +\sum_{i=2}^d (\lambda_i+1) L^* P_i L\\
   &\le (\lambda_1+1) L^* P_1 L + (\lambda_2+1) L^*\left(\sum_{i=2}^d P_i\right) L\\
   &=(\lambda_1+1) L^* P_1 L + (\lambda_2+1) L^*(I_d-P_1) L\\
   &=(\lambda_1-\lambda_2) L^* P_1 L + (\lambda_2+1)\sum_{i=1}^d a_i^*a_i \otimes I \\
   &= \frac{\lambda_1-\lambda_2}{d}\left(\sum_{i=1}^d a_i^* \otimes l_i^*\right)\left( \sum_{j=1}^d a_j \otimes l_j\right)   + (\lambda_2+1)\sum_{i=1}^d a_i^*a_i \otimes I
 \end{align*}
By subtracting $(\sum_{i=1}^d a_i^* \otimes l_i^*)( \sum_{j=1}^d a_j \otimes l_j)$, we have 
\[(1-\frac{\lambda_1-\lambda_2}{d})\left(\sum_{i=1}^d a_i^* \otimes l_i^*\right)\left( \sum_{j=1}^d a_j \otimes l_j\right)\le (\lambda_2+1)\sum_{i=1}^d a_i^*a_i \otimes I,\]
and therefore we have
\[ \left \|\sum_{i=1}^d a_i \otimes l_i \right \|^2_{B(H \otimes \mathcal{F}_{\epsilon})} \le \frac{d(\lambda_2+1)}{d-(\lambda_1-\lambda_2)}\left\|\sum_{i=1}^d a_i^*a_i \right\|_{B(H)}. \]
Similarly, by replacing $a_i$ by $a_i^*$, we obtain an inequality for $\sum_{i=1}^da_i^*\otimes l_i$ and the desired inequality. 
\end{proof}
Taking $a_i=1$ for any $i$, we can deduce the following corollary from the above theorem. 
\begin{cor}\label{EpsilonConvolution}
Let $s_1,\dots,s_d$ be $\epsilon$-semicircles. Then we have
\[\left\|\sum_{i=1}^d s_i\right\| \le 2 \sqrt{d (\lambda_1+1)}\]
where $\lambda_1$ is the largest eigenvalue of $\epsilon$.
Moreover, if we assume that $\epsilon$ is a connected regular graph with a degree less than $d-1$, then we also have
$$\left\|\sum_{i=1}^d s_i\right\| \le 2 d \sqrt{\frac{\lambda_2+1}{d-(\lambda_1-\lambda_2)}} $$
where $\lambda_2$ is the second largest eigenvalue of the adjacency matrix of $\epsilon$.
\end{cor}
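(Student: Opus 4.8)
The plan is to obtain both inequalities as a direct specialization of the preceding theorem, with no new work beyond a substitution. Concretely, I would apply the theorem with $H=\mathbb{C}$ and the coefficients $a_i = 1 \in B(\mathbb{C})$ for every $i=1,\ldots,d$. Under the canonical unitary identification $\mathbb{C}\otimes\mathcal{F}_{\epsilon}\cong\mathcal{F}_{\epsilon}$, the operator $\sum_{i=1}^d a_i\otimes s_i$ is carried to $\sum_{i=1}^d s_i$ acting on $\mathcal{F}_{\epsilon}$, so the left-hand side of the theorem becomes exactly $\|\sum_{i=1}^d s_i\|$.

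For the right-hand side, since each $a_i=1$ is self-adjoint one has
\[
\sum_{i=1}^d a_i^*a_i = \sum_{i=1}^d a_i a_i^* = d\cdot 1,
\]
so both quantities $\big\|\sum_{i=1}^d a_i^*a_i\big\|^{1/2}$ and $\big\|\sum_{i=1}^d a_i a_i^*\big\|^{1/2}$ equal $\sqrt{d}$, and hence the maximum appearing in the theorem is simply $\sqrt{d}$. Substituting this into the first bound of the theorem yields $2\sqrt{\lambda_1+1}\cdot\sqrt{d}=2\sqrt{d(\lambda_1+1)}$, which is the first assertion of the corollary. Substituting into the second bound yields
\[
2\sqrt{\frac{d(\lambda_2+1)}{d-(\lambda_1-\lambda_2)}}\cdot\sqrt{d}
= 2\sqrt{\frac{d^2(\lambda_2+1)}{d-(\lambda_1-\lambda_2)}}
= 2d\sqrt{\frac{\lambda_2+1}{d-(\lambda_1-\lambda_2)}},
\]
which is the second assertion. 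The hypothesis for the second inequality, that $\epsilon$ be a connected regular graph of degree less than $d-1$, is inherited verbatim from the theorem, so nothing extra need be verified there.

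I do not expect any genuine obstacle here, as the statement is a routine corollary. The only points requiring a moment's care are the norm-preserving identification $\mathbb{C}\otimes\mathcal{F}_{\epsilon}\cong\mathcal{F}_{\epsilon}$ (standard for the tensor product of a one-dimensional space) and the algebraic simplification that absorbs the factor $\sqrt{d}$ into the radical in the second bound. Both are elementary.
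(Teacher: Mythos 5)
Your proposal is correct and coincides with the paper's own derivation: the paper obtains Corollary \ref{EpsilonConvolution} precisely by taking $a_i=1$ for all $i$ in Theorem \ref{Main}, so that the maximum on the right-hand side equals $\sqrt{d}$ and the stated bounds follow by the same algebraic simplification you carried out. Nothing further is needed.
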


\begin{rem}\label{OptimalCase}
The first inequality in Corollary \ref{EpsilonConvolution} is optimal in free ($\epsilon=0$) and classical independent ($\epsilon=E_d-I_d$ where all entries of $E_d$ are equal to $1$) case. The second inequality is better than the first because we have
\[d(\lambda_1+1)(1-\frac{\lambda_1-\lambda_2}{d})- d(\lambda_2+1) =(\lambda_1-\lambda_2)(d-\lambda_1-1)\ge 0.\]
Moreover, the second inequality is optimal in the case where one considers a multipartite complete regular graph like below
\begin{center}
 \begin{tikzpicture}
\coordinate (1) at (0,0) node [below] at (1) {1};
\coordinate (2) at (0,2) node [above] at (2) {2};
\coordinate (3) at (4,0) node [below] at (3) {3};
\coordinate (4) at (4,2) node [above] at (4) {4};
\draw (1)--(3);
\draw (1)--(4);
\draw (2)--(3);
\draw (2)--(4);
\foreach \x in {1,2,3,4}
\fill[black] (\x) circle (2pt);
\end{tikzpicture}
  \end{center}
In this case, the operator norm of the sum is $4\sqrt{2}$ because $s_1+s_2$ and $s_3+s_4$ are classically independent and each distribution is a semi-circle distribution with variance $2$ (i.e. $\|s_1+s_2\|=2\sqrt{2}$). On the other hand, the largest eigenvalue of the adjacency matrix is $\lambda_1=2$ and the second largest eigenvalue is $\lambda_2=0$. Note that the first inequality yields the bound $2\sqrt{12}=4\sqrt{3}$, but the second inequality gives us the optimal bound $4\sqrt{2}$. This correspondence still holds for $n$-partite complete regular graph with $m$ discrete components ($m=n=2$ in the above case). 
\end{rem}

\begin{rem}
    The norm bounds that we obtained are improved in the paper \cite[Theorem 1.5]{santos2025khintchineinequalitiestracemonoids} with the constant $2\sqrt{\omega(\epsilon)}$ where $w(\epsilon)$ is the largest size of a clique in $\epsilon$. For the proof, they use the operator inequality $\sum_{i=1}^dl_il_i^*\le \omega(\epsilon)I$. Indeed, by combining this inequality with our proof of Theorem \ref{Main}, we obtain a new bound instead of Corollary \ref{EpsilonConvolution}. In the proof of Theorem \ref{Main}, we can replace $L=^t(l_1,\ldots,l_d)$ with $L'=^t(l_1^*,\ldots,l_d^*)$ by applying $(\epsilon_{ij})$-commutation relation $l_i^*l_j-\epsilon_{ij} l_jl_i^* =\delta_{ij} I$. Then, by following the same way as the proof of Theorem \ref{Main} with the inequality $\sum_{i=1}^d l_il_i^*\le \omega(\epsilon)I$, we obtain
    \begin{align*}
        \left(\sum_{i=1}^{d} l_i\right)^*\left(\sum_{i=1}^{d} l_i\right)&\le d + \lambda_1 L'^*L'\\
        &=d+ \lambda_1\sum_{i=1}^d l_il_i^*\\
        &\le d+ \lambda_1\omega(\epsilon).
    \end{align*}
    Therefore, we have the following norm-bound.
\end{rem}
\begin{cor}\label{newinequality}
Let $s_1,\ldots,s_d$ be $\epsilon$-free semicircular elements. Then we have
     \[\left\|\sum_{i=1}^d s_i\right\| \le 2 \sqrt{ d+ \lambda_1\omega(\epsilon)}\]
     where $\lambda_1$ is the largest eigenvalue of  $\epsilon$ and $\omega(\epsilon)$ is the largest size of a clique in $\epsilon$.
\end{cor}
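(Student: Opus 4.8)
The plan is to supply in full the computation indicated in the remark above, organizing it around its two analytic inputs: the spectral bound $\epsilon \le \lambda_1 I_d$ and the clique bound $\sum_{i=1}^d l_i l_i^* \le \omega(\epsilon) I$. Using the representation $s_i = l_i + l_i^*$ on $\mathcal{F}_{\epsilon}$, the triangle inequality first gives $\|\sum_{i=1}^d s_i\| \le \|\sum_{i=1}^d l_i\| + \|\sum_{i=1}^d l_i^*\|$, and since $\sum_{i=1}^d l_i^* = (\sum_{i=1}^d l_i)^*$ these two norms agree; hence it suffices to prove $\|\sum_{i=1}^d l_i\|^2 \le d + \lambda_1 \omega(\epsilon)$. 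Writing $\|\sum_{i=1}^d l_i\|^2 = \|(\sum_{i=1}^d l_i)^*(\sum_{i=1}^d l_i)\|$, I reduce to bounding the positive operator $\sum_{i,j=1}^d l_i^* l_j$.

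Next I apply the defining relation $l_i^* l_j = \delta_{ij} I + \epsilon_{ij} l_j l_i^*$ to obtain $\sum_{i,j=1}^d l_i^* l_j = d I + \sum_{i,j=1}^d \epsilon_{ij} l_j l_i^*$. With $L' = {}^t(l_1^*, \ldots, l_d^*)$ and using the symmetry of $\epsilon$, the last sum is the operator-valued quadratic form $L'^* \epsilon L'$. Since $\lambda_1 I_d - \epsilon$ is positive semidefinite, writing it as $B^* B$ shows $L'^*(\lambda_1 I_d - \epsilon) L' = (B L')^*(B L') \ge 0$, so that $L'^* \epsilon L' \le \lambda_1 L'^* L' = \lambda_1 \sum_{i=1}^d l_i l_i^*$. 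As $\lambda_1 \ge 0$ for every graph, multiplying the clique bound by $\lambda_1$ and combining gives $\sum_{i,j=1}^d l_i^* l_j \le (d + \lambda_1 \omega(\epsilon)) I$; taking operator norms and reinstating the factor $2$ yields the assertion.

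The remaining point, which I regard as the crux (and which may be quoted directly from \cite{santos2025khintchineinequalitiestracemonoids}), is the clique bound. For a self-contained argument, observe that $l_i l_i^*$ is the orthogonal projection onto the closed span of those basis vectors $e_w$ whose equivalence class has a representative beginning with $i$; consequently $\sum_{i=1}^d l_i l_i^*$ is diagonal in the basis $\{e_w\}_{w \in [d]^*_{\epsilon}}$, its value on $e_w$ being the number of such initial letters of $w$. Finally, two distinct initial letters $i \ne j$ of a given word must be adjacent in $\epsilon$: fixing a representative and comparing the first occurrences of $i$ and $j$, the earlier occurrence precedes the later one, and since the later letter can be brought to the front it commutes with every preceding letter, in particular with the earlier one, forcing $\epsilon_{ij} = 1$. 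Hence the initial letters of any word form a clique, so there are at most $\omega(\epsilon)$ of them, which is exactly $\sum_{i=1}^d l_i l_i^* \le \omega(\epsilon) I$.
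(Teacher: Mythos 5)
Your proposal is correct, and its analytic core is exactly the paper's argument (which appears in the remark immediately preceding the corollary): triangle inequality to reduce to $\bigl\|\sum_i l_i\bigr\|^2$, the commutation relation $l_i^*l_j=\delta_{ij}I+\epsilon_{ij}l_jl_i^*$ to rewrite $\sum_{i,j}l_i^*l_j = dI + L'^*\epsilon L'$ with $L'={}^t(l_1^*,\ldots,l_d^*)$, the spectral bound $\epsilon\le\lambda_1 I_d$, and then the clique bound $\sum_i l_il_i^*\le\omega(\epsilon)I$. (Your explicit checks that the symmetry of $\epsilon$ is what identifies the quadratic form, and that $\lambda_1\ge 0$ so the clique bound may be multiplied by $\lambda_1$, are points the paper leaves implicit.) The one genuine difference is the last ingredient: the paper simply cites \cite{santos2025khintchineinequalitiestracemonoids} for $\sum_i l_il_i^*\le \omega(\epsilon)I$, whereas you attempt a self-contained proof. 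Your identification of $l_il_i^*$ as the projection onto $\overline{\mathrm{span}}\{e_w : w\sim_\epsilon iw'\}$ is correct, and so is the reduction to showing that the initial letters of a word form a clique. But the final combinatorial step is asserted rather than proved: the claim that ``since the later letter can be brought to the front it commutes with every preceding letter'' is precisely the nontrivial lemma about trace monoids, and it does not follow just from saying the letter must ``pass'' its predecessors, since under a general sequence of elementary swaps the surrounding letters move as well. A clean way to close this gap: if $\epsilon_{ij}=0$ and $i\neq j$, the map $\pi_{ij}$ deleting all letters other than $i,j$ is invariant under $\sim_\epsilon$ (an elementary swap either fixes both letters, moves a letter outside $\{i,j\}$, or would have to swap adjacent non-commuting letters, which is disallowed); a representative starting with $i$ and one starting with $j$ would then have projections beginning with $i$ and with $j$ respectively, contradicting $\pi_{ij}$ being well defined on the equivalence class. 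With that patch your argument is complete and, unlike the paper's, fully self-contained.
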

By Wilf's inequality (\cite{WILF1986113}) $\frac{d}{d-\lambda_1}\le w(\epsilon)$, the upper bound $2\sqrt{d+\lambda_1w(\epsilon)}$ is better than $2\sqrt{d w(\epsilon)}$ obtained in \cite[Theorem 1.5]{santos2025khintchineinequalitiestracemonoids} when all operator coefficients are equal to $1$.
\begin{rem}
Based on the proof above, we also have a similar inequality for $\epsilon$-free Haar unitaries $u_1,\ldots,u_d$. These unitaries are closely related to the right-angled Artin group $A(\epsilon)$ associated with $\epsilon$ defined by 
    \[A(\epsilon)=\left\langle g_1,...,g_d\ | \ g_ig_j=g_jg_i, \ (i,j)\in E(\epsilon)\right\rangle\]
    where $E(\epsilon)$ is the set of edges in $\epsilon$. Actually, it is known that $\epsilon$-free Haar unitaries $u_1,\ldots,u_d$ have the same joint distribution as $\lambda(g_1),\ldots,\lambda(g_d)$ with the canonical trace under the left regular representation of $A(\epsilon)$ (See \cite[Prop. 4.2]{MR3552222}). We set $g_{d+i}=g_i^{-1}$ for $i=1,\ldots,d$ and want to estimate $\|\sum_{i=1}^{2d}a_i\otimes \lambda(g_i)\|$.
    In this case, a similar argument works by replacing creation and annihilation operators $l_i,l_i^*$ with operators $u_i,v_i$ defined in \cite{MR1240608} as
\begin{align*}
u_i&=e_i^+ \lambda(g_i), \quad u_{d+i}=e_i^- \lambda(g_i^{-1}) \\
v_{i}&= \lambda(g_i)e_i^-, \quad v_{d+i}= \lambda(g_i^{-1})e_i^+ 
\end{align*}
for $i=1,...,d$ where $e_i^+$ (resp. $e_i^-$) is an orthogonal projection onto the subspace generated by reduced elements in $A(\epsilon)$ starting at $g_i$ (resp. $g_i^{-1}$) up to equivalence from commutation rules. Then, we have $\lambda(g_i)=u_i+v_i$ for $i=1,\ldots,2d$. 
In the same way as the semicircle case, we estimate the norm of $\sum_{i=1}^{2d}a_i \otimes u_i$. 
Note that we have $e_i^{\pm}e_i^{\mp}=0$ and $e_i^{\pm}e_j^{\pm}=0=e_i^{\mp}e_j^{\pm}$ if $\epsilon_{ij}=0$ and $i\neq j$. This implies the following identity 
\[\sum_{i=1}^{2d} a_i^*\otimes u_i^*\sum_{i=1}^{2d} a_j\otimes u_j=U^*\begin{pmatrix}
    I_d+\epsilon & \epsilon\\ \epsilon & I_d+\epsilon
\end{pmatrix}U \]
where $U={}^t(u_1,\ldots,u_{2d}) $ Then, we use the spectral decomposition $\epsilon=\sum_{i=1}^d\lambda_i P_i$ of $\epsilon$ to obtain 
\begin{align*}
U^*
    \begin{pmatrix}
    I_d+\epsilon & \epsilon\\ \epsilon & I_d+\epsilon
\end{pmatrix} U &= \begin{pmatrix}
    I_d & 0 \\ 0 & I_d
\end{pmatrix}+ \sum_{i=1}^d \lambda_i U^*\begin{pmatrix}
    P_i & P_i \\ P_i & P_i
\end{pmatrix} U\\
&\le U^*\begin{pmatrix}
    (\lambda_1+1)I_d & \lambda_1I_d\\\lambda_1I_d & (\lambda_1+1)I_d
\end{pmatrix}U. \\
\end{align*}
By using the identity $u_i^*u_{d+i}=0$ ($i=1,\ldots,d$), we have
\begin{align*}
U^*\begin{pmatrix}
    (\lambda_1+1) I_d & \lambda_1 I_d\\ \lambda_1 I_d & (\lambda_1+1)I_d
\end{pmatrix} U&=(\lambda_1+1)U^*\begin{pmatrix}
    I_d & 0 \\ 0 & I_d
\end{pmatrix} U\\
&= (\lambda_1+1)\sum_{i=1}^{2d} a_i^*a_i \otimes u_i^*u_i\\
&\le (\lambda_1+1)\sum_{i=1}^{2d} a_i^*a_i \otimes I .
\end{align*}
where we use the fact that each $u_i^*u_i$ is an orthogonal projection. Thus, by taking the operator norm, we get 
\[\left\|\sum_{i=1}^{2d} a_i \otimes u_i\right\|_{B(H\otimes l^2(A(\epsilon)))}^2\le (\lambda_1+1) \left\|\sum_{i=1}^{2d}a_i^*a_i\right\|. \]
In the same way, we also have the second estimate in Theorem \ref{Main} for $\epsilon$-free Haar unitaries. To obtain the analog of the inequality in Corollary \ref{newinequality}, we use the $(\epsilon_{ij})$-commutation relation $u_i^*u_j=\epsilon_{ij} u_ju_i^*$ for $i\neq j$. 
\[ \sum_{i=1}^{2d}u_i^*\sum_{i=1}^{2d}u_i =\sum_{i=1}^{2d} u_i^*u_i+U'^*\begin{pmatrix}
    \epsilon & \epsilon\\ \epsilon & \epsilon
\end{pmatrix} U'\]
where $U'={}^t(u_1^*,\ldots,u_{2d}^*)$. Since $2\lambda_1$ is the largest eigenvalue of the matrix $\epsilon \otimes E_2$, we have $\epsilon \otimes E_2 \le 2\lambda_1$ and
\begin{align*}
    \sum_{i=1}^{2d}u_i^*\sum_{i=1}^{2d}u_i &\le\sum_{i=1}^{2d} u_i^*u_i+2\lambda_1U'^*U' \\
    &=\sum_{i=1}^{2d} u_i^*u_i+2\lambda_1\sum_{i=1}^{2d} u_iu_i^*\\
    &\le 2d+2\lambda_1 w(\epsilon)
\end{align*}
where we use $u_i^*u_i\le 1$ and $\sum_{i=1}^{2d}u_iu_i^*\le w(\epsilon)$ (see the proof of Theorem 1.5 and Remark 5.1 in \cite{santos2025khintchineinequalitiestracemonoids}). Therefore, we have the Haar unitary version of Corollary \ref{newinequality} 
\[\left\|\sum_{i=1}^{2d}\lambda(g_i)\right\|\le 2\sqrt{2d + 2\lambda_1w(\epsilon)}.\]
\end{rem}

Next, we turn to lower bounds.

\begin{thm}
Let $s_1,\ldots,s_d$ be $\epsilon$-free semicircular elements. Then we have
\[
\left\|\sum_{i=1}^d s_i\right\| \ge \max \left\{\sqrt{4\omega(\epsilon)^2 + d - \omega(\epsilon)},\ 2\sqrt{d} \right\}
\]
where $\omega(\epsilon)$ is the largest size of a clique in $\epsilon$. 
\end{thm}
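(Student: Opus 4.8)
The plan is to work throughout in the Fock representation $s_i = l_i + l_i^*$ on $\mathcal{F}_{\epsilon}$ from Section \ref{Representation}, writing $S = \sum_{i=1}^d s_i = L + L^*$ with $L = \sum_i l_i$; since the operator norm depends only on the joint distribution of the $\epsilon$-free family, this loses nothing. I would establish the two lower bounds $2\sqrt d$ and $\sqrt{4\omega(\epsilon)^2 + d - \omega(\epsilon)}$ separately, since the maximum is governed by different terms in the free-like and clique-like regimes.

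For the bound $\|S\| \ge 2\sqrt d$, the idea is to compare the even moments of $S$ in the vacuum state with those of a free semicircular sum. Because $S$ is self-adjoint, $\|S\|^{2k} = \|S^{2k}\| \ge \tau(S^{2k})$ for every $k$, so it suffices to bound $\tau(S^{2k})$ from below. I would invoke the Wick-type moment formula for the mixed $(\epsilon_{ij})$-Gaussian system (the $q_{ij} = \epsilon_{ij}$ specialization of \cite{MR1213608,MR1289833}), by which $\tau(s_{i_1}\cdots s_{i_{2k}})$ is a sum over index-monochromatic pair partitions $\pi$ of a weight equal to the product of $\epsilon_{i_V i_W}$ over crossing pairs of blocks $V,W$ of $\pi$. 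Since $\epsilon_{ij}\ge 0$, every weight is non-negative, and the non-crossing pair partitions contribute weight exactly $1$; summing over the free index assignments, the non-crossing part alone contributes $|NC_2(2k)|\, d^k = C_k d^k$, where $C_k$ is the $k$-th Catalan number. Hence $\tau(S^{2k}) \ge C_k d^k$, so $\|S\| \ge (C_k d^k)^{1/2k} \to 2\sqrt d$ as $k \to \infty$.

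For the clique bound, I would fix a maximal clique $K$ with $|K| = \omega(\epsilon)$ and let $\mathcal{H}_K \subseteq \mathcal{F}_{\epsilon}$ be the closed span of the basis vectors $e_w$ indexed by words $w$ in the letters of $K$. This subspace is invariant under $l_i$ and $l_i^*$ for $i \in K$, and since all pairs in $K$ commute it is unitarily equivalent to $\bigotimes_{i\in K}\ell^2(\mathbb{N}_0)$, under which $C := \sum_{i\in K}s_i$ becomes $\sum_{i\in K}(S_i + S_i^*)$ with $S_i$ the unilateral shift; as each $S_i + S_i^*$ has spectrum $[-2,2]$, the restriction of $C$ to $\mathcal{H}_K$ has spectrum $[-2\omega,2\omega]$, hence norm $2\omega$. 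I would then choose unit vectors $\Psi_N \in \mathcal{H}_K$ with $\|C\Psi_N\| \to 2\omega$. The key points are that for $j \notin K$ one has $l_j^*\Psi_N = 0$ (a clique word contains no letter $j$), so $S\Psi_N = C\Psi_N + \sum_{j\notin K} l_j\Psi_N$, and that $C\Psi_N \in \mathcal{H}_K$ together with the vectors $l_j\Psi_N$ (supported on words beginning with the distinct letters $j\notin K$) are mutually orthogonal. Using that each $l_j$ is an isometry, this gives $\|S\Psi_N\|^2 = \|C\Psi_N\|^2 + \sum_{j\notin K}\|\Psi_N\|^2 \to 4\omega^2 + (d - \omega)$, whence $\|S\| \ge \sqrt{4\omega^2 + d - \omega}$.

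The clique argument is essentially elementary once the invariant subspace $\mathcal{H}_K$ and its tensor-product structure are identified, so I expect the main obstacle to lie in the $2\sqrt d$ bound: specifically, in justifying that the crossing weights in the $(\epsilon_{ij})$-Gaussian moment formula are non-negative and that the non-crossing pair partitions reproduce exactly the free Catalan count, so that $\tau(S^{2k})$ genuinely dominates the free moment $C_k d^k$. If one prefers not to quote the mixed-$q$ Wick formula, this positivity would instead have to be extracted directly from the combinatorics of the contractions $l_i^* l_j = \epsilon_{ij} l_j l_i^* + \delta_{ij}$ on $\mathcal{F}_{\epsilon}$, which is the technical heart of the estimate.
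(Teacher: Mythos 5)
Your proposal is correct and takes essentially the same approach as the paper: the $2\sqrt{d}$ bound by moment domination (the paper phrases the positivity via the $(\epsilon,i)$-non-crossing pair-partition formula of Speicher--Wysocza\'{n}ski rather than the mixed-$q$ Wick formula, but it is the same comparison with the free Catalan count), and the clique bound via approximate eigenvectors supported on clique words with orthogonality of the $l_j\Psi_N$, $j\notin K$. Indeed, the paper's explicit test vector $\xi_N=\sum_{j_1,\ldots,j_k=1}^N e_{i_1^{j_1}\cdots i_k^{j_k}}$ is exactly (up to normalization) the tensor product of shift approximate eigenvectors that your identification $\mathcal{H}_K\cong\bigotimes_{i\in K}\ell^2(\mathbb{N}_0)$ produces.
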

\begin{proof}
By using the identity of $\mathrm{C}^*$-probability space (e.g. \cite[Proposition 3.17]{MR2266879}), we have \[\left\|\sum_{i=1}^d s_i\right\|=\lim_{n\to \infty}\tau\left[\left(\sum_{i=1}^ds_i\right)^{2n}\right]^{\frac{1}{2n}}.\] 
Since each term $\tau\left[s_{i(1)}s_{i(2)}\cdots s_{i(2n)}\right]$ in the expansion of $\tau\left[\left(\sum_{i=1}^ds_i\right)^{2n}\right]$ is a sum over $(\epsilon,i)$-non-crossing (pair) partitions (\cite[Definition 5.1]{MR3552222}) which contains non-crossing (pair) partitions, the free case ($\epsilon=0$) attains minimal value of $\left\|\sum_{i=1}^d s_i\right\|$. Since the operator norm of the sum of $d$ free semicircles is equal to $2\sqrt{d}$ (e.g. \cite[Exercise 7.27]{MR2266879}), we have    
\[ \left\|\sum_{i=1}^d s_i\right\| \ge 2 \sqrt{d}\]
for any $\epsilon$-free semicircular elements.

On the other hand,
let $A=\{i_1,\ldots,i_k\}$ be a clique of $\epsilon$. Then for $N \in \mathbb{N}$, we consider the vector in $\mathcal{F}_{\epsilon}$
$$\xi_N=\sum_{j_1,\ldots,j_k=1}^N e_{i_1^{j_1}\cdots i_k^{j_k}}.$$
Since all summands are mutually orthogonal, the square of the norm of this vector is equal to $N^k$. Since $\{i_1,\ldots,i_k\}$ is a clique, we have
\begin{eqnarray*}
s_{i_m}\xi_N &=& \sum_{j_1,\ldots,j_k=1}^N e_{i_1^{j_1}\cdots i_m^{j_m+1}\cdots i_k^{j_k}} + \sum_{j_1,\ldots,j_k=1}^N e_{i_1^{j_1}\cdots i_m^{j_m-1}\cdots i_k^{j_k}}\\
&=& \sum_{\substack{1\le j_n \le N, \ n \neq m \\ 2 \le j_m \le N-1}} 2 e_{i_1^{j_1}\cdots i_k^{j_k}}+ \sum_{\substack{1\le j_n \le N, \ n \neq m \\ j_m=0,1,N,N+1}} e_{i_1^{j_1}\cdots i_k^{j_k}}. 
\end{eqnarray*}
By taking summation over $m$, we have
\begin{eqnarray*}
\sum_{m=1}^k s_{i_m} \xi_N = \eta + \sum_{j_1,\ldots,j_k=2}^{N-1} 2k e_{i_1^{j_1}\cdots i_k^{j_k}} 
\end{eqnarray*}
where $\eta$ is orthogonal to the second term and $\|\eta\|^2 = O(N^{k-1})$. On the other hand, if $i \not \in A$, we have
$$s_i \xi_N = \sum_{j_1,\ldots,j_k=1}^N e_{i \cdot i_1^{j_1}\cdots i_k^{j_k}}$$
which is orthogonal to $\sum_{m=1}^k s_{i_m} \xi_N$.
Thus we have
\[\left\|\sum_{i=1}^d s_i \xi_N\right\|^2=4k^2(N-2)^k+(d-k)N^k + O(N^{k-1}),\]
and we obtain
$$\left\|\sum_{i=1}^d s_i\right\| \ge \lim_{N\to \infty} \frac{\|\sum_{i=1}^d s_i \xi_N\| }{\|\xi_N\|} = \sqrt{4k^2+d-k}. $$
We obtain the theorem by taking $k=\omega(\epsilon)$.

\end{proof}

\begin{exam}
Let us consider the XY-model (complement graphs of line graphs, i.e. only neighboring elements are free). We assume that $s_1$ and $s_d$ are free, so that we have a regular graph. This is a connected $(d-3)$-regular graph, and the second largest eigenvalue is close to $1$ as $d \to \infty$. 

To see this, note that the eigenvalues of a cyclic permutation matrix $P\in M_d(\mathbb{C})$ ($Pe_i=e_{i+1}$ mod $d$ for a canonical basis of $\mathbb{C}^d$) are $\exp\left[-\frac{2\pi k}{d}\sqrt{-1}\right]$ ($k=0,\ldots,d-1$) with a corresponding eigenvector $v_k={}^t\left(1,\exp\left[\frac{2\pi k}{d}\sqrt{-1}\right],\ldots,\exp\left[\frac{2\pi k(d-1)}{d}\sqrt{-1}\right]\right)$, and $E_d-I_d-(P+P^*)$ is what we want to compute.
Thus, when $d$ is odd, we have $\lambda_1=d-3$ with the eigenvector $v_0$, $\lambda_{2k}=\lambda_{2k+1}=-1-2\cos(\frac{2\pi}{d}(\frac{d+1}{2}-k))$ with the eigenvectors $v_{\frac{d+1}{2}-k},v_{\frac{d-1}{2}+k}$ for $k=1,\ldots,\frac{d-1}{2}$. When $d$ is even, we have $\lambda_1=d-3$ with the eigenvector $v_0$, $\lambda_2=1$ with the eigenvector $v_{\frac{d}{2}}$, and $\lambda_{2k+1}=\lambda_{2k+2}=-1-2\cos(\frac{2\pi}{d}(\frac{d}{2}-k))$ with the eingenvectors $v_{\frac{d}{2}-k},v_{\frac{d}{2}+k}$ for $k=1,\ldots,\frac{d}{2}-1$. 
When $d$ is odd, Corollary \ref{EpsilonConvolution} gives the bound:
\begin{align*}
 \left\|\sum_{i=1}^d s_i\right\| &\le 2d \sqrt{-2\cos\left(\frac{\pi(d-1)}{d}\right)\left(d-(d-3+1+2\cos\left(\frac{\pi(d-1)}{d}\right)\right)^{-1}}\\
 & = 2d \sqrt{-\cos\left(\frac{\pi(d-1)}{d}\right)\left(1-\cos\left(\frac{\pi(d-1)}{d}\right)\right)^{-1}}.
\end{align*}
When $d$ is even, Corollary \ref{EpsilonConvolution} gives a bound $2d\sqrt{\frac{1+1}{d-(d-3-1)}}=\sqrt{2}d$. On the other hand, by triangular inequality, we also have
$$\left\|\sum_{i=1}^d s_i\right\| \le \|s_1+s_2\| \cdot \frac{d}{2}=2\sqrt{2}\cdot \frac{d}{2}= \sqrt{2} d.$$
Regarding lower bounds for the XY-model,
we can take $\{2k-1\}_{k=1}^{\lceil d/2\rceil}$ as a maximal clique and we apply Theorem \ref{CliqueEstimate} to have
$$\left\|\sum_{i=1}^d s_i\right\| \ge \sqrt{4\lceil d/2 \rceil^2+d-\lceil d/2 \rceil}> 2 \lceil d/2 \rceil.$$
Note that we can easily see $2 \lceil d/2 \rceil$ is a lower bound since $\{s_{2k-1}\}_{k=1}^{\lceil d/2\rceil}$ is a classically independent family of semicircles, so our result is a non-trivial improvement on naive lower bounds.
\end{exam}

\bibliographystyle{amsalpha}
\bibliography{reference.bib}
    
\end{document}